\newtheorem{theorem}{Theorem}
\begin{document}
\author{Alexander E Patkowski}
\title{Another Smallest Part Function related to Andrews' $spt$ function}
\date{\vspace{-5ex}}
\maketitle
\abstract{In this note, we offer some relations and congruences for two interesting $spt$-type functions, which together form a relation to Andrews' $spt$-function.}

\section{Introduction and Main Results}
In [3], we find the identity of Andrews' 
\begin{equation}\sum_{n\ge1}\frac{q^n}{(1-q^n)(q^n)_{\infty}}=\sum_{n\ge1}np(n)q^n+\frac{1}{(q)_{\infty}}\sum_{n\ge1}\frac{(-1)^nq^{n(3n+1)/2}(1+q^n)}{(1-q^n)^2}.\end{equation}
Here $p(n)$ is the number of partitions of $n,$ and the last series on the right side of (1) generates $N_2(n)=\sum_{m\in\mathbb{Z}}m^2N(m,n),$ $N(m,n)$ being the number of partitions of $n$ with rank $m$ [1]. The largest part minus the number of parts is defined to be the rank. The function $spt(n)$ counts the number of smallest parts among integer partitions of $n.$ Lastly, we used the familiar notation [7] $(a)_n=(a;q)_n=(1-a)(1-aq)\cdots(1-aq^{n-1}).$
\par In this note we find a $spt$-type function that is related to the generating function in (1) and falls into the same class of $spt$-type functions like the one offered in [9]. However, this note differs from [9] in that we will find the ``crank companion" to create a ``full" spt function related to Andrews' $spt$-function. Here we are also appealing to relations to $spt(n)$ modulo $2,$ whereas in [9] we concentrated on relations to $spt(n)$ modulo $3.$ Lastly, the partitions involved in this study are different, and deserve a separate study. Let $M_2(n)=\sum_{m\in\mathbb{Z}}m^2M(m,n),$ where $M(m,n)$ is the number of partitions of $n$ with crank $m$ [2].
\begin{theorem} We have,
\begin{equation}\sum_{n\ge1}\frac{q^n(q^{2n+1};q^2)_{\infty}}{(1-q^n)^2(q^{n+1})_{\infty}}=\frac{1}{(q^2;q^2)_{\infty}}\sum_{n\ge1}\frac{nq^n}{1-q^n}-\frac{1}{2}\sum_{n\ge1}N_2(n)q^{2n},\end{equation}

\begin{equation}\sum_{n\ge1}\frac{q^{n(n+1)/2}(q^{2n+1};q^2)_{\infty}}{(1-q^n)^2(q^{n+1})_{\infty}}=\frac{1}{(q^2;q^2)_{\infty}}\sum_{n\ge1}\frac{nq^n}{1-q^n}-\frac{1}{2}\sum_{n\ge1}M_2(n)q^{2n}.\end{equation}
\end{theorem}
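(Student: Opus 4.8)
The plan is to reduce both identities to elementary Lambert-series identities by first simplifying the infinite products on each left-hand side and then invoking the standard second-moment generating functions for the crank and the rank.

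First I would record the product simplification
\[
\frac{(q^{2n+1};q^2)_\infty}{(q^{n+1};q)_\infty}=\frac{(q;q)_n}{(q;q^2)_n\,(q^2;q^2)_\infty},
\]
which follows from $(q^{n+1};q)_\infty=(q)_\infty/(q)_n$, from $(q^{2n+1};q^2)_\infty=(q;q^2)_\infty/(q;q^2)_n$, and from the splitting $(q)_\infty=(q;q^2)_\infty(q^2;q^2)_\infty$. This pulls a common factor $1/(q^2;q^2)_\infty$ out of each left-hand side, so that the left side of (2) becomes $\frac{1}{(q^2;q^2)_\infty}\sum_{n\ge1}\frac{q^n(q)_n}{(1-q^n)^2(q;q^2)_n}$, and likewise for (3) with $q^n$ replaced by $q^{n(n+1)/2}$; here I would also use $(q)_n/(1-q^n)^2=(q)_{n-1}/(1-q^n)$ to lighten the summand. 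Since the first term on each right-hand side already carries the factor $1/(q^2;q^2)_\infty$, this is the natural normalization.

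Next I would rewrite the right-hand sides. Logarithmic differentiation of the crank generating function $C(z,q)=(q)_\infty/\bigl((zq)_\infty(q/z)_\infty\bigr)$ at $z=1$ gives the classical evaluation $\frac12\sum_{n\ge1}M_2(n)q^n=\frac{1}{(q)_\infty}\sum_{n\ge1}\frac{nq^n}{1-q^n}=\sum_{n\ge1}np(n)q^n$, that is $M_2(n)=2np(n)$; applying this with $q\mapsto q^2$ turns the $M_2$-term of (3) into $\frac{1}{(q^2;q^2)_\infty}\sum_{n\ge1}\frac{nq^{2n}}{1-q^{2n}}$. For (2) I would instead feed Andrews' identity (1) the substitution $q\mapsto q^2$, which expresses $\tfrac12\sum_{n\ge1}N_2(n)q^{2n}$ through $\sum_{n\ge1}np(n)q^{2n}$ and $\sum_{n\ge1}spt(n)q^{2n}$. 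In both cases the elementary cancellation $\frac{q^n}{1-q^n}-\frac{q^{2n}}{1-q^{2n}}=\frac{q^n}{1-q^{2n}}$ collapses the two surviving Lambert series, and after clearing the common factor $1/(q^2;q^2)_\infty$ the identity (3) becomes equivalent to the core identity
\[
\sum_{n\ge1}\frac{q^{n(n+1)/2}(q)_{n-1}}{(1-q^n)(q;q^2)_n}=\sum_{n\ge1}\frac{nq^n}{1-q^{2n}}.
\]
The identity (2) reduces, after subtracting (3) and invoking the spt-relation $2\,spt(n)=M_2(n)-N_2(n)$ (itself immediate from the two moment facts), to the companion statement $\sum_{n\ge1}\frac{(q^n-q^{n(n+1)/2})(q)_{n-1}}{(1-q^n)(q;q^2)_n}=\sum_{n\ge1}\frac{q^{2n}(q^2;q^2)_n}{(1-q^{2n})^2}$.

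The main obstacle is the core $q$-series identity displayed above (together with its companion): a basic hypergeometric sum carrying a triangular exponent must be shown to equal a weighted Lambert series whose $q^N$-coefficient is $\sum_{d\mid N,\ N/d\ \mathrm{odd}}d$. I expect to prove it either by exhibiting a Bailey pair and summing via Bailey's lemma, or by casting the summand into Rogers--Fine form; a direct combinatorial reading of $1/(q;q^2)_n$ (partitions into small odd parts) weighted against the triangular term and the factor $1/(1-q^n)$ is a plausible alternative. Once this core identity is in hand the reductions above are routine and assemble immediately into (2) and (3); matching the first coefficients ($q+2q^2+4q^3+4q^4+\cdots$ on each side of the core identity) furnishes a useful consistency check along the way.
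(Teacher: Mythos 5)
Your reductions are sound as far as they go: the product simplification $\frac{(q^{2n+1};q^2)_\infty}{(q^{n+1})_\infty}=\frac{(q)_n}{(q;q^2)_n(q^2;q^2)_\infty}$ is correct, the crank-moment evaluation $M_2(n)=2np(n)$ and the relation $2\,spt(n)=M_2(n)-N_2(n)$ are legitimately invoked, and the Lambert-series cancellation does collapse (3) to your core identity
\[
\sum_{n\ge1}\frac{q^{n(n+1)/2}(q)_{n-1}}{(1-q^n)(q;q^2)_n}=\sum_{n\ge1}\frac{nq^n}{1-q^{2n}},
\]
whose first coefficients check out. But none of this constitutes a proof: every step you take is a reversible reformulation, so the core identity (and its companion, which is just Theorem 2 in disguise) carries the entire content of the theorem, and for it you offer only a menu of techniques you ``expect'' to succeed. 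The hard step is exactly the one left undone.

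For the record, the route you guess at --- ``exhibiting a Bailey pair and summing via Bailey's lemma'' --- is precisely what the paper does, and the missing ingredients are specific. One needs the identity obtained by differentiating Bailey's lemma in $\rho_1$ and $\rho_2$ and setting both equal to $1$ (equation (12) of the paper),
\[
\sum_{n\ge1}(q)_{n-1}^2\beta_nq^n=\alpha_0\sum_{n\ge1}\frac{nq^n}{1-q^n}+\sum_{n\ge1}\frac{\alpha_nq^n}{(1-q^n)^2},
\]
together with Slater's pairs C(1) and C(5) relative to $a=1$: namely $\beta_n=1/((q)_n(q;q^2)_n)$ with $\alpha_{2n}=(-1)^nq^{n(3n-1)}(1+q^{2n})$ for (2), and $\beta_n=q^{n(n-1)/2}/((q)_n(q;q^2)_n)$ with $\alpha_{2n}=(-1)^nq^{n(n-1)}(1+q^{2n})$ for (3), the odd-indexed $\alpha$'s vanishing. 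With these, the $\beta$-sums are exactly the simplified left-hand sides you derived, while the $\alpha$-sums become the theta-type series equal to $-\tfrac12(q^2;q^2)_\infty\sum_n N_2(n)q^{2n}$ and $-\tfrac12(q^2;q^2)_\infty\sum_n M_2(n)q^{2n}$ respectively (the first via Andrews' identity (1) with $q\mapsto q^2$, the second via the known crank-moment generating function). Until you name a concrete Bailey pair (or some other mechanism) and carry this computation through, the proposal has a genuine gap at its center.
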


For our next Theorem, which is a number-theoretic interpretation of Theorem 1, we will use the following definitions. We define a triangular 
partition to be of the form $\delta_l=(l-1, l-2, \cdots, 1),$ $l\in\mathbb{N}.$ Define the smallest part of a partition $\pi$ to be $s(\pi),$ and the largest part to be $l(\pi).$ We will also consider the partition pair $\sigma=(\pi,\delta_i),$ where we set $i=s(\pi).$ Since $i=s(\pi)$ in $\sigma$ we have that 
$s(\pi)-l(\delta_{i=s(\pi)})=s(\pi)-(s(\pi)-1)=1.$ If we include $\delta_i$ in a partition, we are increasing its size by $\binom{i}{2}$ and including the component $q^{1+2+\cdots +i-1}$ in its generating function. This has the property that all parts from $1$ to $i-1$ appear exactly once and are less than $i.$

\begin{theorem} Let $spt_o^{+}(n)$ count the number of smallest parts among the integer partitions $\pi$ of $n$ where odd parts greater than $2s(\pi)$ do not occur. Let $spt_o^{-}(n)$ count the number of smallest parts among the integer partitions $\sigma=(\pi,\delta_{s(\pi)})$ of $n$ where $\pi$ is a partition where odd parts greater than $2s(\pi)$ do not occur in $\pi.$  Define $spt_o(n):=spt_o^{+}(n)-spt_o^{-}(n).$ Then, $spt_o(2n)=spt(n).$
\end{theorem}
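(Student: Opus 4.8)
The plan is to read the two left-hand sides of Theorem 1 as the generating functions of $spt_o^{+}$ and $spt_o^{-}$ respectively, subtract identity (3) from identity (2) so that the common term $\frac{1}{(q^2;q^2)_\infty}\sum_{n\ge1}\frac{nq^n}{1-q^n}$ cancels, and then invoke the classical relation $spt(n)=\tfrac12\bigl(M_2(n)-N_2(n)\bigr)$ (which follows from Andrews' $spt(n)=np(n)-\tfrac12 N_2(n)$ together with $M_2(n)=2np(n)$, cf. [1], [2]). Matching coefficients of $q^{2n}$ on the two sides of the resulting identity should then produce $spt_o(2n)=spt(n)$ at once.

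The crux is the combinatorial reading of the summand on the left of (2). First I would factor $\frac{q^n}{(1-q^n)^2}=\sum_{k\ge1}k\,q^{nk}$, which records a smallest part equal to $n$ occurring with some multiplicity $k$, weighted by $k$ — exactly the number of smallest parts. The remaining factor is the key point: I claim
\begin{equation*}
\frac{(q^{2n+1};q^2)_\infty}{(q^{n+1})_\infty}=\prod_{\substack{k\ge n+1\\ k\ \mathrm{even}}}\frac{1}{1-q^k}\ \prod_{\substack{n+1\le k\le 2n-1\\ k\ \mathrm{odd}}}\frac{1}{1-q^k},
\end{equation*}
because the numerator $\prod_{m\ge n}(1-q^{2m+1})$ cancels precisely the odd factors $1-q^{k}$ with $k\ge 2n+1$ in the denominator $\prod_{k\ge n+1}(1-q^k)$. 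Thus this factor generates, with positive coefficients, the partitions into parts $\ge n+1$ in which no odd part exceeds $2n=2s(\pi)$; multiplying back by $\frac{q^n}{(1-q^n)^2}$ and summing over $n$ yields $\sum_{N\ge1}spt_o^{+}(N)q^{N}$, the left side of (2). For (3) the only change is the extra factor $q^{n(n+1)/2-n}=q^{\binom{n}{2}}$, which is the generating weight of the triangular partition $\delta_n=(n-1,\dots,1)$ appended as the second component of $\sigma=(\pi,\delta_{s(\pi)})$; since $\delta_n$ is completely determined by $n=s(\pi)$ and its largest part $n-1$ stays strictly below $s(\pi)$, this introduces no new smallest parts and no overcounting, so the left side of (3) is $\sum_{N\ge1}spt_o^{-}(N)q^{N}$.

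Granting these two readings, subtracting (3) from (2) gives
\begin{equation*}
\sum_{N\ge1}\bigl(spt_o^{+}(N)-spt_o^{-}(N)\bigr)q^{N}=\frac12\sum_{n\ge1}\bigl(M_2(n)-N_2(n)\bigr)q^{2n}=\sum_{n\ge1}spt(n)\,q^{2n}.
\end{equation*}
The left side is $\sum_{N\ge1}spt_o(N)q^{N}$ by definition, so comparing coefficients gives $spt_o(2n)=spt(n)$ while $spt_o(2n+1)=0$. The main obstacle I anticipate is the second paragraph: making the cancellation identity and, especially, the $\delta_{s(\pi)}$ bookkeeping fully rigorous — one must check that reading off the ``number of smallest parts of $\pi$'' from the weight $k$ remains consistent once the pair $\sigma$ is formed, and that the restriction ``odd parts $>2s(\pi)$ do not occur'' is exactly, and not merely approximately, what the quotient of $q$-Pochhammer symbols encodes. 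The remaining algebra (the cancellation of the common term and the appeal to the $M_2$--$N_2$ relation) is routine.
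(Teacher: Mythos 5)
Your proposal is correct and follows essentially the same route as the paper: subtract (3) from (2) so the common term cancels, invoke $spt(n)=\tfrac12(M_2(n)-N_2(n))$ via $M_2(n)=2np(n)$ and Andrews' identity, and equate coefficients of $q^{2n}$. The only difference is that you spell out the combinatorial reading of the two left-hand sides (the Pochhammer cancellation and the $q^{\binom{n}{2}}$ weight of $\delta_{s(\pi)}$) in more detail than the paper, which merely records the resulting generating function.
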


With the above definitions, we can write the generating function. We have

$$\sum_{n\ge1}spt_o(n)q^n=\sum_{n\ge1}\left(q^n+2q^{2n}+3q^{3n}+\cdots\right)\frac{(q^{2n+1};q^2)_{\infty}}{(q^{n+1})_{\infty}}\left(1-q^{1+2+\cdots n-1}\right).$$

\section{Proof of Theorem 1 and Theorem 2}
The proofs require the methods used in [6, 9] and a few more observations. A pair of sequences $(\alpha_n,\beta_n)$ is known to be a Bailey pair with respect to $a$ if 
		\begin{equation}\beta_n=\sum_{r\ge0}^{n} \frac{\alpha_r}{(aq;q)_{n+r} (q;q)_{n-r}}.\end{equation}
The next result is Bailey's lemma [4]. \newline
{\bf Bailey's Lemma} \it If $(\alpha_n,\beta_n)$ form a Bailey pair with respect to $a$ then
\begin{equation}\sum_{n\ge0}^{\infty}(\rho_1)_n(\rho_2)_n(aq/\rho_1\rho_2)^n\beta_n=\frac{(aq/\rho_1)_{\infty}(aq/\rho_2)_{\infty}}{(aq)_{\infty}(aq/\rho_1\rho_2)_{\infty}}\sum_{n\ge0}^{\infty}\frac{(\rho_1)_n(\rho_2)_n(aq/\rho_1\rho_2)^n\alpha_n}{(aq/\rho_1)_n(aq/\rho_2)_n}.\end{equation} \rm
The following are known Bailey pairs $(\alpha_n,\beta_n)$ relative to $a=1$ [10, C(1)]:
\begin {equation} \alpha_{2n+1}=0,\end{equation}
\begin {equation} \alpha_{2n}=(-1)^nq^{n(3n-1)}(1+q^{2n}),\end{equation}
\begin{equation} \beta_n=\frac{1}{(q)_n(q;q^2)_{n}},\end{equation}
and the pair [10, C(5)] $(\alpha_n,\beta_n),$ 
\begin {equation} \alpha_{2n+1}=0,\end{equation}
\begin {equation} \alpha_{2n}=(-1)^nq^{n(n-1)}(1+q^{2n}),\end{equation}
\begin{equation} \beta_n=\frac{q^{n(n-1)/2}}{(q)_n(q;q^2)_{n}}.\end{equation}
In both pairs $\alpha_0=1.$
\rm
Differentiating Bailey's lemma (putting $a=1$) with respect to both variables $\rho_1$ and $\rho_2$ and setting each variable equal to $1$ each time gives us [9],
\begin{equation}\sum_{n\ge1}(q)_{n-1}^2\beta_nq^n=\alpha_0\sum_{n\ge1}\frac{nq^n}{1-q^n}+\sum_{n\ge1}\frac{\alpha_nq^n}{(1-q^n)^2}.\end{equation}
The identity (2) follows from inserting the Bailey pair (6)-(8) into equation (9) and then multiplying through by $(q^2;q^2)_{\infty}^{-1}.$ Identity (3) follows from inserting the Bailey pair (9)-(10) into equation (9) and then multiplying through by $(q^2;q^2)_{\infty}^{-1}.$ This gives us Theorem 1. 
\\*
\par To get Theorem 2, we subtract equation (3) from equation (2), and note that $spt(n)=\frac{1}{2}(M_2(n)-N_2(n)),$ after noting that (see [6])
$$2\sum_{n\ge1}np(n)q^n=\sum_{n\ge1}M_2(n)q^n=\frac{2}{(q)_{\infty}}\sum_{n\ge1}\frac{nq^n}{1-q^n}.$$
 The result follows from equating coefficients of $q^{2n}.$

\section{More Notes and Concluding Remarks} 

\par Naturally, it is of interest to investigate equations (2) and (3) individually. As we noted previously, the left side of equation (2) generates $spt_o^{+}(n),$ and the left side of equation (3) generates $spt_o^{-}(n).$

\begin{theorem}We have, $spt_{o}^{+}(2n)\equiv spt(n)\pmod{2}.$\end{theorem}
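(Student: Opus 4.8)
The plan is to work directly with the generating function for $spt_o^{+}(n)$ furnished by the right-hand side of (2) and to isolate its even-order coefficients. First I would record that $\sum_{n\ge1}\frac{nq^n}{1-q^n}=\sum_{n\ge1}\sigma(n)q^n$, where $\sigma$ is the sum-of-divisors function, and that $1/(q^2;q^2)_\infty$ is a power series in $q^2$ alone. Consequently the only even powers of $q$ in the product $\frac{1}{(q^2;q^2)_\infty}\sum_{n\ge1}\sigma(n)q^n$ come from the even-indexed $\sigma(2m)$, while the term $-\tfrac12\sum_nN_2(n)q^{2n}$ already contributes only even powers. Extracting the coefficient of $q^{2n}$ and writing $Q=q^2$, I obtain
\begin{equation*}\sum_{n\ge1}spt_o^{+}(2n)Q^n=\frac{1}{(Q;Q)_\infty}\sum_{m\ge1}\sigma(2m)Q^m-\frac12\sum_{n\ge1}N_2(n)Q^n.\end{equation*}

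Next I would write down the generating function of $spt(n)$. Using $spt(n)=\tfrac12(M_2(n)-N_2(n))$ together with the stated identity $\sum_nM_2(n)q^n=\frac{2}{(q)_\infty}\sum_n\sigma(n)q^n$, one finds
\begin{equation*}\sum_{n\ge1}spt(n)Q^n=\frac{1}{(Q;Q)_\infty}\sum_{m\ge1}\sigma(m)Q^m-\frac12\sum_{n\ge1}N_2(n)Q^n.\end{equation*}
Subtracting the two displays annihilates the $N_2$ terms and leaves the common factor $1/(Q;Q)_\infty$, giving
\begin{equation*}\sum_{n\ge1}\bigl(spt_o^{+}(2n)-spt(n)\bigr)Q^n=\frac{1}{(Q;Q)_\infty}\sum_{m\ge1}\bigl(\sigma(2m)-\sigma(m)\bigr)Q^m.\end{equation*}
By Theorem 2 the left side is also $\sum_{n\ge1}spt_o^{-}(2n)Q^n$, which serves as a useful check and shows the claim is equivalent to $spt_o^{-}(2n)$ being even.

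The crux is then a parity statement about $\sigma(2m)-\sigma(m)$. Writing $m=2^a u$ with $u$ odd and using multiplicativity, $\sigma(2m)-\sigma(m)=(2^{a+2}-1)\sigma(u)-(2^{a+1}-1)\sigma(u)=2^{a+1}\sigma(u)$, which is even for every $m\ge1$. Hence every coefficient of $\sum_{m\ge1}(\sigma(2m)-\sigma(m))Q^m$ is even, and since $1/(Q;Q)_\infty$ has integer coefficients, its Cauchy product with this series has only even coefficients. Equating coefficients of $Q^n$ yields $spt_o^{+}(2n)-spt(n)\equiv0\pmod 2$, which is precisely the assertion.

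The only genuinely delicate point is the even-part extraction: one must verify that $1/(q^2;q^2)_\infty$ introduces no odd powers and that the cross terms of the Cauchy product are correctly reindexed under $q^2\mapsto Q$. Once that bookkeeping is settled, the divisor-sum parity computation is elementary and closes the argument. I expect no serious obstacle beyond this careful accounting, and the alternative route through (3) and $spt_o^{-}(2n)$ produces the identical generating function, so the two approaches reinforce one another.
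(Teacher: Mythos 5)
Your proposal is correct and follows essentially the same route as the paper: both extract the coefficient of $q^{2n}$ from (2), reduce via the parity congruence $\sigma(2m)\equiv\sigma(m)\pmod{2}$, and invoke the convolution identity $\sum_k p(k)\sigma(n-k)=np(n)$ together with Andrews' formula $spt(n)=np(n)-\tfrac{1}{2}N_2(n)$. Your repackaging of the computation as a single generating-function subtraction, with the explicit factorization $\sigma(2m)-\sigma(m)=2^{a+1}\sigma(u)$ for $m=2^a u$, is just a cleaner presentation of the same argument.
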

\begin{proof} After noting that $\sigma(2n)=3\sigma(n)-2\sigma(n/2),$ $\sigma(2n)\equiv\sigma(n)\pmod{2},$ and 
\begin{equation}\sum_{n\ge1}spt_{o}^{+}(n)q^n=\frac{1}{(q^2;q^2)_{\infty}}\sum_{n\ge1}\sigma(n)q^n-\frac{1}{2}\sum_{n\ge1}N_2(n)q^{2n},\end{equation}
we can take coefficients of $q^{2n}$ in (13) to get
\begin{equation}spt_{o}^{+}(2n)=\sum_{k}p(k)\sigma(2(n-k))-\frac{1}{2}N_2(n).\end{equation}
Hence, combining these notes, we compute
\begin{align} spt_{o}^{+}(2n)\\
&\equiv \sum_{k}p(k)\sigma(n-k)-\frac{1}{2}N_2(n)\pmod{2} \\
&\equiv np(n)-\frac{1}{2}N_2(n)\pmod{2}\\
&\equiv spt(n)\pmod{2}.
\end{align}
\end{proof}
\begin{theorem}We have, $spt_{o}^{-}(2n)\equiv 0\pmod{2}.$\end{theorem}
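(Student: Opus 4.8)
The plan is to mirror the proof of Theorem 3 verbatim, with $M_2$ in place of $N_2$, and then to observe that the crucial cancellation now produces $0$ instead of $spt(n)$. First I would record the generating function for $spt_{o}^{-}(n)$ directly from equation (3). Since the left side of (3) generates $spt_{o}^{-}(n)$ and $\sum_{n\ge1}\frac{nq^n}{1-q^n}=\sum_{n\ge1}\sigma(n)q^n$, this reads
\[\sum_{n\ge1}spt_{o}^{-}(n)q^n=\frac{1}{(q^2;q^2)_{\infty}}\sum_{n\ge1}\sigma(n)q^n-\frac{1}{2}\sum_{n\ge1}M_2(n)q^{2n}.\]
Extracting the coefficient of $q^{2n}$ and using $\frac{1}{(q^2;q^2)_{\infty}}=\sum_{k\ge0}p(k)q^{2k}$, I obtain the exact analogue of (14), namely $spt_{o}^{-}(2n)=\sum_{k}p(k)\sigma(2(n-k))-\frac{1}{2}M_2(n)$.

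Next I would reduce modulo $2$ using the same arithmetic facts invoked for Theorem 3: from $\sigma(2m)=3\sigma(m)-2\sigma(m/2)$ we get $\sigma(2m)\equiv\sigma(m)\pmod 2$, so
\[spt_{o}^{-}(2n)\equiv\sum_{k}p(k)\sigma(n-k)-\frac{1}{2}M_2(n)\pmod 2.\]
The convolution $\sum_{k}p(k)\sigma(n-k)$ is the coefficient of $q^n$ in $\frac{1}{(q)_{\infty}}\sum_{m\ge1}\frac{mq^m}{1-q^m}$, which by the identity $\frac{2}{(q)_{\infty}}\sum_{n\ge1}\frac{nq^n}{1-q^n}=\sum_{n\ge1}M_2(n)q^n=2\sum_{n\ge1}np(n)q^n$ quoted in Section 2 equals $np(n)$; hence $\sum_{k}p(k)\sigma(n-k)=np(n)$.

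Finally, that same identity gives $M_2(n)=2np(n)$, so $\frac{1}{2}M_2(n)=np(n)$ exactly. Substituting both evaluations yields
\[spt_{o}^{-}(2n)\equiv np(n)-np(n)\equiv 0\pmod 2,\]
which is the assertion. The one point requiring care — and the only place this argument diverges from Theorem 3 — is the bookkeeping of the subtracted term: for $spt_{o}^{+}$ the residual $\tfrac12 N_2(n)$ combined with $np(n)$ to produce $spt(n)=\tfrac12(M_2(n)-N_2(n))$, whereas here the residual term is itself $\tfrac12 M_2(n)=np(n)$, so the two contributions cancel identically modulo $2$. I expect no genuine obstacle beyond confirming that the generating function is read off correctly from (3) and that $\tfrac12 M_2(n)$ is the integer $np(n)$; everything else is a direct transcription of the computation in (15)--(17).
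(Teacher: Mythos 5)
Your argument is correct and is essentially identical to the paper's own proof: extract the coefficient of $q^{2n}$ from equation (3), reduce via $\sigma(2m)\equiv\sigma(m)\pmod 2$, identify the convolution $\sum_k p(k)\sigma(n-k)$ with $np(n)$, and cancel against $\tfrac12 M_2(n)=np(n)$. The paper merely compresses these steps into ``the computations are similar to Theorem 3,'' so your write-up is a faithful (and more explicit) transcription of the intended argument.
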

\begin{proof} The computations are similar to Theorem 3. Using equation (3) we compute
\begin{align} spt_{o}^{-}(2n)\\
&\equiv \sum_{k}p(k)\sigma(n-k)-\frac{1}{2}M_2(n)\pmod{2} \\
&\equiv np(n)-\frac{1}{2}M_2(n)\pmod{2}\\
&\equiv 0\pmod{2}.
\end{align}
In line (22) we used $2np(n)=M_2(n).$
\end{proof}

For an example of Theorem 3, consider a partition of $4:$ $(4),$ $(3,1),$ $(2,2),$ $(1,1,1,1).$ In a partition where odd parts greater than twice the smallest do not occur, we omit $(3,1).$ Hence $spt_o^{+}(4)=7,$ and $spt(2)=3$ (counting smallest of $(2)$ and $(1,1)$). Hence $2$ divides $7-3=4.$ \par For an example of Theorem 4, consider the partition pair $\sigma=(\pi^{*}, \delta_{s(\pi^{*})})$ of 
$6:$ $(3,2)\in\pi^{*},$ $(1)\in\delta_{s(\pi^{*})},$ and $(3)\in\pi^{*},$ $(2,1)\in\delta_{s(\pi^{*})}.$ Hence $spt_{o}^{-}(6)$ is equal to $2$ plus the appearances of the smallest parts in $\pi^*$ of those partition pairs $\sigma=(\pi^{*}, \delta_{s(\pi^{*})})$ which have the empty partition $\emptyset\in\delta_i.$ That is, the pairs $(3,1,1,1)\in\pi^{*},$ $\emptyset\in\delta_{s(\pi^{*})};$ $(4,1,1)\in\pi^{*},$ $\emptyset\in\delta_{s(\pi^{*})};$ $(2,1,1,1,1)\in\pi^{*},$ $\emptyset\in\delta_{s(\pi^{*})};$ $(1,1,1,1,1,1)\in\pi^{*},$ $\emptyset\in\delta_{s(\pi^{*})};$ and finally $(3,2,1)\in\pi^{*},$ $\emptyset\in\delta_{s(\pi^{*})}.$ This gives us $spt_{o}^{-}(6)=18=0\pmod{2}.$
\\*
Equating coefficients of $q^{2n+1}$ in Theorem 1 gives us a nice corollary.
\begin{theorem}
We have, $spt_o^{-}(2n+1)=spt_o^{+}(2n+1).$
\end{theorem}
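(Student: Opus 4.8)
The plan is to read the two identities of Theorem 1 as generating-function statements and then compare coefficients at odd exponents, exactly as the sentence preceding the claim suggests. First I would record, from the interpretations established at the start of Section 3, that the left-hand side of (2) generates $spt_o^{+}$ and the left-hand side of (3) generates $spt_o^{-}$, so that
\[
\sum_{n\ge1}spt_o^{+}(n)q^n=\frac{1}{(q^2;q^2)_{\infty}}\sum_{n\ge1}\frac{nq^n}{1-q^n}-\frac{1}{2}\sum_{n\ge1}N_2(n)q^{2n},
\]
\[
\sum_{n\ge1}spt_o^{-}(n)q^n=\frac{1}{(q^2;q^2)_{\infty}}\sum_{n\ge1}\frac{nq^n}{1-q^n}-\frac{1}{2}\sum_{n\ge1}M_2(n)q^{2n}.
\]
The decisive structural feature is that these two right-hand sides share an \emph{identical} first term, while their remaining terms involve only even powers of $q$.

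The key step is then to subtract the first series from the second. The common term $\frac{1}{(q^2;q^2)_{\infty}}\sum_{n\ge1}\frac{nq^n}{1-q^n}$ cancels identically, leaving
\[
\sum_{n\ge1}\bigl(spt_o^{-}(n)-spt_o^{+}(n)\bigr)q^n=-\frac{1}{2}\sum_{n\ge1}\bigl(M_2(n)-N_2(n)\bigr)q^{2n}.
\]
Since the right-hand side is a power series in $q^2$, every coefficient attached to an odd power of $q$ must vanish. Equating the coefficient of $q^{2n+1}$ on both sides therefore gives $spt_o^{-}(2n+1)-spt_o^{+}(2n+1)=0$, which is exactly the asserted equality $spt_o^{-}(2n+1)=spt_o^{+}(2n+1)$.

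I do not expect any genuine analytic obstacle here: the cancellation is purely formal, and no congruence identities such as those powering Theorems 3 and 4 are needed, precisely because the two ``correction'' sums $\frac12\sum N_2(n)q^{2n}$ and $\frac12\sum M_2(n)q^{2n}$ never touch odd exponents. The only point I would treat as carrying real content, and so verify before invoking the coefficient comparison, is the combinatorial reading used in the first sentence of the proof, namely that the left-hand sides of (2) and (3) are indeed the generating functions of $spt_o^{+}$ and $spt_o^{-}$. That interpretation is what guarantees the two series differ only in an even-power tail, after which the odd-coefficient comparison is immediate.
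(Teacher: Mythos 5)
Your proposal is correct and is exactly the paper's argument: the paper derives Theorem 5 by equating coefficients of $q^{2n+1}$ in the two identities of Theorem 1, which works precisely because the only term contributing odd powers is the common summand $\frac{1}{(q^2;q^2)_{\infty}}\sum_{n\ge1}\frac{nq^n}{1-q^n}$, while the $N_2$ and $M_2$ corrections live entirely in even powers. Your subtraction step makes this cancellation explicit, but it is the same formal coefficient comparison the paper intends.
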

\par Let $t_k(n)$ be the number of representations of $n$ as a sum of $k$ triangular numbers. We may use a classical result of Legendre that $\sigma(2n+1)=t_4(n),$ to see that $spt_o^{-}(2n+1)$ (and therefore also $spt_o^{+}(2n+1)$) is generated by
the product expansion
\begin{equation}q\frac{(q^4;q^4)_{\infty}^3}{(q^2;q^4)_{\infty}^5}.\end{equation}
\par To see examples of Theorem 5, consider first $n=1.$ We have that $spt_o^{-}(3)=spt_o^{+}(3)=5.$ This is because $(2,1)\in\pi^{*},$ $\emptyset\in\delta_{s(\pi^{*})};$ $(1,1,1)\in\pi^{*},$ $\emptyset\in\delta_{s(\pi^{*})};$ and $(2)\in\pi^{*},$ $(1)\in\delta_{s(\pi^{*})},$ for $spt_o^{-}(3).$ The $spt_o^{+}(3)$ is clearer.
\par Another example is $spt_o^{-}(5)=spt_o^{+}(5)=12.$ We only compute $spt_o^{-}(5)$ for the reader. We compute $(2,2)\in\pi^{*},$ $(1)\in\delta_{s(\pi^{*})};$ $(2,2,1)\in\pi^{*},$ $\emptyset\in\delta_{s(\pi^{*})};$ $(4,1)\in\pi^{*},$ $\emptyset\in\delta_{s(\pi^{*})};$ $(1,1,1,1,1)\in\pi^{*},$ $\emptyset\in\delta_{s(\pi^{*})};$ and finally $(2,1,1,1)\in\pi^{*},$ $\emptyset\in\delta_{s(\pi^{*})}.$

\par  It is interesting to note that since $spt(n)$ is even for almost all natural $n$ [5], we have that $spt_{o}^{+}(2n)$ is even for almost all natural $n$ in terms of arithmetic density. 
\par We can also easily obtain congruences for $spt_o(n)$ using the Ramanujan-type congruences in [3]:
\begin{equation} spt_{o}(2(5n+4))\equiv 0\pmod{5},\end{equation}
\begin{equation} spt_{o}(2(7n+5))\equiv 0\pmod{7},\end{equation}
\begin{equation} spt_{o}(2(13n+6))\equiv 0\pmod{13}.\end{equation}
It is important to make the observation that the two Bailey pairs (6)--(8) and (9)--(11) are key in obtaining the ``rank component" (2) and the``crank component" (3), respectively.

1390 Bumps River Rd. \\*
Centerville, MA
02632 \\*
USA \\*
E-mail: alexpatk@hotmail.com

\begin{thebibliography}{9}
\bibitem{ConcreteMath}

G. E. Andrews, \emph{The Theory of Partitions}, The Encyclopedia of Mathematics and its Applications, Vol. 2, Addison-Wesley, Reading (1976). 
\bibitem{ConcreteMath}
G. E. Andrews and F. Garvan, \emph{Dyson's crank of a partition,} Bull. Amer. Math. Soc. (N.S.) 18 (1988), no. 2, 167--171.
\bibitem{ConcreteMath}
G. E. Andrews, \emph{The number of smallest parts in the partitions of n,} J. Reine Angew. Math. 624 (2008), 133--142.
\bibitem{ConcreteMath}
 W. N. Bailey, \emph{Identities of the Rogers--Ramanujan type}, Proc. London Math. Soc. (2), 50 (1949), 1--10.
\bibitem{ConcreteMath}
A. Folsom and K. Ono, \emph{The spt-function of Andrews,} Proc. Natl. Acad. Sci. USA 105 (2008), 20152--20156.
\bibitem{ConcreteMath}
F. Garvan, \emph{Higher Order spt--Functions,} Adv. in Math. 228 (2011), 241--265.
\bibitem{ConcreteMath}
 G. Gasper,  M. Rahman, \emph{Basic hypergeometric series}, Cambridge Univ. Press, Cambridge, 1990.
\bibitem{ConcreteMath}
A. E. Patkowski, \emph{Divisors, partitions and some new q-series identities,} Colloq. Math. 117 (2009), 289--294.
\bibitem{ConcreteMath}
A. E. Patkowski, \emph{A Strange Partition Theorem Related to the Second Atkin-Garvan Moment,} Preprint.
\bibitem{ConcreteMath}
L. J. Slater, \emph{A new proof of Rogers' transformations of infinite series,} Proc. London Math. Soc. (2) 53 (1951), 460--475.
\end{thebibliography}
\end{document}